\def\R{\mathbb{R}}
\def\N{\mathbb{N}}
\def\E{\mathbb{E} }
\def\al{\alpha}
\def\ep{\varepsilon}
\def\v{{\mathbf{v}}}
\def\t{{\mathbf{t}}}
\def\n{{\mathbf{n}}}
\newtheorem{theorem}{{Theorem}}[section]
\newtheorem{proposition}[theorem]{{Proposition}}
\newtheorem{isom.ext}[theorem]{{Trivial isometric extension}}
\newtheorem{definition}[theorem]{{Definition}}
\newtheorem{lemma}[theorem]{{Lemma}}
\newtheorem{corollary}[theorem]{{Corollary}}
\newtheorem{remark}[theorem]{{Remark}}
\newtheorem{notation}[theorem]{{Notation}}
\definecolor{purple}{rgb}{0.65,0.12,0.94}
\definecolor{forestgreen}{rgb}{0.4,0.64,0.13}
\title{$C^{1}$-isometric embeddings of Riemannian spaces in Lorentzian spaces}
\author[Alaa Boukholkhal]{Alaa Boukholkhal}\thanks{I am thankful to the reviewer for their valuable comments and remarks. \\
The author is supported by the LABEX MILYON (ANR-10-LABX-0070) of
Université de Lyon, within the program "Investissements d'Avenir" (ANR-11-IDEX-0007) operated by the French National Research Agency (ANR).
}
\address{UMPA, École normale supérieure de Lyon, France}
\email{mohamed.boukholkhal@ens-lyon.fr}
\begin{document}
\begin{center}
\begin{abstract} 
  For any compact Riemannian manifold $(V,g)$ and any Lorentzian manifold $(W,h)$, we prove that any spacelike embedding $f: V \rightarrow W$  that is long ($g\leq f^{*}h$) can be $C^{0}$-approximated by a $C^{1}$  isometric embedding $F: (V,g) \rightarrow (W,h)$.
\end{abstract}
\end{center}
  
\maketitle

\section{Introduction}
The theorem of Nash-Kuiper \cite{nash1954c1},\cite{kuiper1955c1} states that any Riemannian manifold $(V,g)$ admits a $C^{1}$ isometric embedding into the Euclidean space $(\E^{q},g_{euc})$ as long as there exists an embedding $f:V\rightarrow \E^{q}$, such that $g-f^{*}g_{euc}$ is positive semi-definite (we say that $f$ is short). This theorem was one of the main inspirations of Gromov's $h$-principle theory (\cite{gromov1986}, \cite{eliash}), and it was generalized in many contexts (symplectic \cite{Dambrasym}, contact \cite{Dambracont}, sub-Riemannian \cite{LeDonne}, for totally real embeddings \cite{theilliere2022convex}). In the pseudo-Riemannian setting, the theorem extends immediately if there is enough codimension:  
\begin{theorem}[Gromov \cite{gromov1986} p204, \cite{GromovNash} p34]
Let $(V,g)$ and $(W,h)$ be two pseudo-Riemannian manifolds, where $(r_{+}, r_{-})$ $($respectively $(q_{+},q_{-})$$)$ is the signature of $g$ $($respectively $h$$)$. Let $f: V \rightarrow W$ be an embedding such that $f^{*}h$ has the same signature as $g$. 
If $q_{+} > r_{+}$ and   $q_{-}>r_{-}$, then $f$ can be $C^{0}$-approximated by a $C^{1}$ isometric embedding $F: (V,g) \rightarrow (W,h)$. 
    \end{theorem}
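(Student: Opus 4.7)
The plan is to adapt the classical Nash-Kuiper convex integration scheme to the pseudo-Riemannian setting, exploiting the codimension conditions $q_{+}>r_{+}$ and $q_{-}>r_{-}$. The key observation is that these conditions guarantee, at every point of $f(V)$, the existence of both a spacelike and a timelike normal direction to $df(TV)$. A corrugation along a spacelike normal adds a positive rank-one form to $f^{*}h$, while a corrugation along a timelike normal adds a negative one; together they allow us to adjust $f^{*}h$ by an arbitrary primitive symmetric form, regardless of sign.

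First I would reduce to a local statement: by a partition of unity, a compactness argument and the fact that the target can be locally modeled on $\R^{q_{+}+q_{-}}$ with flat metric $h_{0}=\sum_{i=1}^{q_{+}}dx_{i}^{2}-\sum_{j=1}^{q_{-}}dy_{j}^{2}$, it suffices to work on a small coordinate chart $V\subset\R^{n}$. On such a chart, consider the error tensor $\Delta:=g-f^{*}h$. Unlike the Riemannian Nash-Kuiper setting, $\Delta$ is not assumed positive semi-definite, but by Sylvester's law of inertia applied pointwise one can smoothly decompose
\[
\Delta \;=\; \sum_{i=1}^{N_{+}} a_{i}\,\ell_{i}\otimes\ell_{i}\;-\;\sum_{j=1}^{N_{-}} b_{j}\,m_{j}\otimes m_{j},
\]
where the $\ell_{i}, m_{j}$ are taken from a fixed finite family of $1$-forms (for instance $dx_{k}+dx_{l}$) and the coefficients $a_{i},b_{j}:V\to[0,\infty)$ are smooth, using the standard Nash trick combined with a partition of unity on the space of symmetric bilinear forms.

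Next I would prove the main technical lemma: the \emph{pseudo-Riemannian corrugation step}. Given a signature-preserving $C^{1}$ embedding $f:V\to W$, a $1$-form $\ell$ on $V$, a sign $\sigma\in\{+1,-1\}$, and $\varepsilon>0$, one constructs $F:V\to W$ of the form
\[
F(x) \;=\; f(x)\;+\;\tfrac{1}{N}\Gamma\!\bigl(N\ell(x),\cdot\bigr)\,\nu(x),
\]
where $\nu$ is a unit normal to $df(TV)$ with $h(\nu,\nu)=\sigma$ (available because $q_{+}>r_{+}$ when $\sigma=+1$ and $q_{-}>r_{-}$ when $\sigma=-1$), $N\gg 1$ is a frequency, and $\Gamma$ is a $1$-periodic profile chosen exactly as in Nash-Kuiper so that the quadratic correction produced by the chain rule equals $\sigma\,\ell\otimes\ell$ up to an error controllable by $1/N$. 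Because the amplitude is $O(1/N)$ one obtains $\|F-f\|_{C^{0}}<\varepsilon$, and for $N$ large enough the pulled-back metric satisfies $F^{*}h=f^{*}h+\sigma\,\ell\otimes\ell+o(1)$ and retains the signature $(r_{+},r_{-})$.

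Iterating Step 3 according to the decomposition in Step 2, with amplitudes decreasing geometrically and frequencies increasing fast enough, one produces a Cauchy sequence in $C^{1}$ converging to a map $F_{\infty}$ which is $C^{0}$-close to $f$, isometric, and (being a $C^{1}$-limit of embeddings with controlled geometry) still an embedding. The main obstacle I anticipate is the same one that makes pseudo-Riemannian convex integration more delicate than its Riemannian counterpart: ensuring that the signature of $F_{k}^{*}h$ remains equal to $(r_{+},r_{-})$ at \emph{every} stage of the iteration, since we no longer have the a priori positivity of $g-f^{*}h$ that in the Riemannian case automatically forces $F_{k}^{*}h\leq g$. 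This must be arranged by a careful scheduling of which rank-one form is corrected at each stage, keeping the intermediate errors small enough in $C^{0}$ that the eigenvalues of $F_{k}^{*}h$ never vanish, which in turn forces the frequency/amplitude choices in Step 3 to depend on the current distance to the degenerate locus in the bundle of symmetric bilinear forms of signature $(r_{+},r_{-})$.
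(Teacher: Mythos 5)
The paper does not actually prove this theorem: it is quoted from Gromov as background, and the paper's own work addresses precisely the codimension-one case excluded by the hypotheses $q_{+}>r_{+}$, $q_{-}>r_{-}$. Your overall strategy is the mechanism the introduction attributes to Gromov --- the codimension hypotheses give both a spacelike and a timelike unit normal, so corrugations can add or subtract primitive forms $\pm a\,\ell\otimes\ell$ --- so you are on the intended track, and your reduction to a local model and your decomposition of $\Delta$ into a difference of sums of squares are standard and fine. However, two points do not work as written. The corrugation you display, $F=f+\tfrac{1}{N}\Gamma(N\ell,\cdot)\,\nu$, oscillates purely in the normal direction; since $h(df,\nu)=0$ the induced correction is $(\partial_s\Gamma)^2\,\sigma\,\ell\otimes\ell+O(1/N)$, and a $1$-periodic $\Gamma$ with $(\partial_s\Gamma)^2$ constant would have to be a non-smooth sawtooth. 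So the correction cannot be made pointwise equal to a prescribed multiple of $\ell\otimes\ell$ this way; the loop must mix the tangential vector $df(u)$ with the normal, via a circular rotation $r(\cos\theta\,\mathbf{t}+\sin\theta\,\nu)$ for a spacelike normal and a hyperbolic one $r(\cosh\theta\,\mathbf{t}+\sinh\theta\,\nu)$ for a timelike normal --- the latter being exactly the loop family \eqref{loop} used in Proposition \ref{local}. This is a repairable imprecision.

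The more serious gap is the signature-preservation issue you flag in your last paragraph but do not resolve, and it is not merely a matter of taking the per-step errors small. Because corrections of both signs occur, the intermediate targets are partial sums $f^{*}h+\sum_{i\in S}\pm c_i\,\ell_i\otimes\ell_i$, and these trace a path from $f^{*}h$ to $g$ in the space of symmetric bilinear forms. When $r_{-}>0$ the locus of forms of signature $(r_{+},r_{-})$ is open and connected but not convex: for instance $\mathrm{diag}(1,-1)$ and $\mathrm{diag}(-1,1)$ both have signature $(1,1)$ yet their midpoint is degenerate. Hence a badly ordered decomposition can force an intermediate pullback through the degenerate cone no matter how finely the steps are subdivided, and ``keeping the $C^{0}$ errors small'' cannot save it. The missing ingredient is to fix in advance a continuous path of signature-$(r_{+},r_{-})$ forms joining $f^{*}h$ to $g$ (which exists by connectedness of the locus), discretize it, and decompose each small increment into primitive forms; the scheduling is then dictated by the path rather than arranged ad hoc. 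With those two repairs your outline becomes a correct account of Gromov's argument and is consistent with the corrugation machinery the paper develops for its own Theorem \ref{T}.
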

  In the classical Nash-Kuiper approach, a normal vector field is used to deform the initial embedding and increase the metric. In the setting of Gromov's theorem, we have both timelike and spacelike normal directions, which allows deforming the embedding in both directions and hence, increasing and decreasing the metric following which direction is used. In the following, we will be interested in the codimension $1$ case which is not covered in the theorem above. More precisely, we will study the case where $q_{-}>r_{-}=0$, and $r_{+}=q_{+}$: 
  \begin{theorem}\label{T}
Let $(V,g)$ be a compact Riemannian manifold, $(W,h)$ a  pseudo-Riemannian manifold and $f:V \rightarrow (W,h)$ a long spacelike embedding  i.e, $g\leq f^{*}h$. Then, the embedding $f$ can be $C^{0}$-approximated by a $C^{1}$ isometric embedding $F: (V,g) \rightarrow (W,h)$.
\end{theorem}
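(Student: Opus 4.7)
The plan is to adapt Nash's corrugation scheme to the Lorentzian setting, with corrugations carried out along a timelike rather than a spacelike normal direction. Because $f$ is spacelike and long, the gap $\Delta:=f^{*}h-g$ is positive semi-definite and we must \emph{shrink} $f^{*}h$ down to $g$ instead of enlarging a short metric, as in the classical Riemannian case. The key observation is that oscillating along a unit timelike normal $\nu$ (satisfying $h(\nu,\nu)=-1$) contributes a rank-one tensor of the \emph{opposite} sign to the classical Nash calculation, yielding exactly the decrease of the induced metric that we need. Such a $\nu$ exists whenever the signature of $h$ has a negative direction: being spacelike, $f$ forces the normal bundle to contain a timelike direction, and in pure codimension one the normal bundle is itself timelike.

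Proceed in stages. At each stage, decompose $\Delta=\sum_{i=1}^{K}\phi_{i}^{2}\,du_{i}\otimes du_{i}$ as a finite sum of rank-one positive semi-definite tensors (via a partition of unity and Nash's standard decomposition of a positive symmetric matrix as a sum of rank-one pieces). Treat each summand by a single elementary corrugation
$$F_{N}(x)=f(x)+\tfrac{1}{N}\bigl(\alpha(Nu(x))\,df_{x}(\tau)+\beta(Nu(x))\,\nu(x)\bigr),$$
with $\tau$ a unit tangent field suitably aligned with $du$, and $\alpha,\beta$ mean-zero periodic profiles chosen so that, after averaging the high-frequency oscillations, the quadratic terms $\langle(\alpha')^{2}\rangle h(df(\tau),df(\tau))+\langle(\beta')^{2}\rangle h(\nu,\nu)$ precisely supply the $-\phi^{2}$ needed to cancel the rank-one surplus; the sign $h(\nu,\nu)=-1$ is what makes this equation solvable. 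This yields $F_{N}^{*}h=f^{*}h-\phi^{2}du\otimes du+O(1/N)$ together with $\|F_{N}-f\|_{C^{0}}=O(1/N)$, with $F_{N}$ still spacelike and long for $N$ large. Iterating $K$ times annihilates $\Delta$ up to a small error, and chaining stages with the usual Nash--Kuiper balance between increasing frequencies and geometrically decreasing error budgets produces a sequence Cauchy in $C^{1}$ whose limit is the desired isometric embedding.

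The technical heart of the argument is the elementary corrugation lemma: one must verify that the negative sign coming from the timelike normal really produces the required decrement of the pullback metric (rather than merely an adjustment one cannot control), that the spacelike and long conditions persist under the $O(1/N)$ errors (so that a unit timelike normal $\nu$ remains available at the next step), and that the classical Nash--Kuiper convergence estimates carry over to a pseudo-Riemannian target when phrased against an auxiliary Riemannian background metric on $W$. Once these points are secured, the infinite iteration proceeds as in the Euclidean case, and the single timelike normal direction does the work of the extra spacelike normal direction required by Gromov's higher-codimension theorem.
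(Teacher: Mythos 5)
Your overall strategy coincides with the paper's: decompose the isometric default into primitive terms $\phi^{2}\,du\otimes du$, kill each one by a codimension-one corrugation along the unit timelike normal (whose sign $h(\nu,\nu)=-1$ turns the classical Nash increment into a decrement), and iterate with the usual frequency/error bookkeeping. One local point is stated too loosely: choosing the profiles so that the \emph{averaged} quadratic terms $\langle(\alpha')^{2}\rangle-\langle(\beta')^{2}\rangle$ supply $-\phi^{2}$ is not sufficient, because $\|F_{N}^{*}h-\mu\|_{C^{0}}$ must be small pointwise, not on average; with only the averaged condition the metric error oscillates with amplitude $O(1)$ at frequency $N$. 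One needs $h$ evaluated on the corrugated direction to be constant in the fast variable, which forces the hyperbolic profile $r\bigl(\cosh\theta\,\mathbf{t}+\sinh\theta\,\mathbf{n}\bigr)$ with $\theta=\alpha\cos(2\pi s)$ (so that $\cosh^{2}\theta-\sinh^{2}\theta=1$ gives the pointwise identity), with $r$ determined by $\eta$ and a separate average condition determining the amplitude $\alpha$. This is recoverable, being the standard Kuiper corrugation transposed to a pseudo-circle.

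The genuine gap is the assertion that ``the classical Nash--Kuiper convergence estimates carry over \ldots\ when phrased against an auxiliary Riemannian background metric.'' They do not carry over as stated, and this is precisely where the Lorentzian setting differs from the Riemannian one. Each corrugation is a hyperbolic rotation of the tangent/normal frame by an angle up to $\alpha_{max}$; measured in a fixed Riemannian metric $\widetilde{h}$ on $W$ this multiplies $\|df\|_{\widetilde{h}}$ and, crucially, the $\widetilde{h}$-norm of the new unit timelike normal by a factor of order $\cosh(\alpha_{max})$, i.e.\ the tangent planes drift toward the light cone. Over $n$ stages these factors compound geometrically, so the stage-$n$ estimate is of the form $\|df_{n}-df_{n-1}\|_{\widetilde{h}}\leq a_{n}+T\|g_{n}-g_{n-1}\|^{1/2}(2\widetilde{K})^{n}$ rather than the classical $\|g_{n}-g_{n-1}\|^{1/2}$. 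To obtain a $C^{1}$-Cauchy sequence one needs two further ingredients absent from your sketch: a bound on the amplitudes uniform over all stages (available because the amplitude decreases with the remaining isometric default, so $\alpha_{max}$ can be fixed once at the outset), and a choice of interpolating sequence $g_{n}=g+\delta_{n}\Delta$ with $\delta_{n}\to 0$ fast enough that $\sum_{n}\sqrt{\delta_{n-1}-\delta_{n}}\,(2\widetilde{K})^{n}<\infty$. Without these the iteration you describe need not converge in $C^{1}$, and the limiting map could degenerate along the light cone. Absorbing this geometric growth of the frame is the main new content of the proof beyond the classical argument.
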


The main difference here with the classical Nash-Kuiper theorem and Gromov's generalization is that the normal at any point of $V$ (seen as an embedded hypersurface in $W$) is timelike. We will use this normal to deform the initial embedding $f$ following the Nash-Kuiper process. However, we need to ensure that the tangent space is far from the light cone at each step of the process to guarantee the $C^{1}$ regularity at the end (see lemma \ref{lem2}).

As in the classical Nash-Kuiper theorem, Theorem \ref{T} reflects high flexibility when considering $C^{1}$-isometric embeddings, which is not the case for higher regularity. Indeed, for any surface $\Sigma$ of genus $\geq 2$ and any cocompact lattice $\Gamma$ in $SO^{\circ}(2,1)$ isomorphic to $\pi_1(\Sigma)$, there exists a unique hyperbolic metric on $\Sigma$ that admits a smooth isometric embedding in the quotient of the $(2+1)$-dimensional solid timelike cone by $\Gamma$. On the other hand, Theorem \ref{T} implies: 
\begin{corollary}
  Let $\Sigma$ and $\Gamma$ as above. Then, any hyperbolic metric on $\Sigma$ admits a $C^{1}$-isometric embedding in the quotient of the $(2+1)$-dimensional solid timelike cone by $\Gamma$.
\end{corollary}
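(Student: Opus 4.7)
The plan is to reduce the corollary to a direct application of Theorem \ref{T} by producing a long spacelike initial embedding. First I would set up the ambient space explicitly: letting $C \subset \R^{2,1}$ denote the interior of the future timelike cone $\{x : \langle x, x\rangle < 0,\ x_0 > 0\}$, the group $\Gamma \subset \SO^\circ(2,1)$ acts freely, properly discontinuously, and isometrically on $C$, so $W := C/\Gamma$ inherits a Lorentzian metric $h$ of signature $(2,1)$. We are then in the setting of Theorem \ref{T} as soon as we exhibit a long spacelike embedding of $(\Sigma, g)$ into $(W,h)$.

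The candidates are provided by the family of rescaled hyperboloids. For each $t > 0$, the set $H_t := \{x \in C : \langle x, x\rangle = -t^2\}$ is a $\Gamma$-invariant spacelike hypersurface of $C$ whose induced Riemannian metric has constant curvature $-1/t^2$. Passing to the quotient, $H_t/\Gamma \subset W$ is a compact spacelike surface isometric to $(\Sigma_0, t^2 g_0)$, where $\Sigma_0 := H_1/\Gamma$ carries the curvature-$(-1)$ hyperbolic metric $g_0$. Since $\Gamma \cong \pi_1(\Sigma)$ is a cocompact surface group, $\Sigma_0$ has the same genus as $\Sigma$ and is therefore diffeomorphic to it. Fixing a diffeomorphism $\varphi : \Sigma \to \Sigma_0$ and composing with the natural radial diffeomorphism $\Sigma_0 \to H_t/\Gamma$, I obtain a smooth spacelike embedding $f_t : \Sigma \to W$ satisfying $f_t^* h = t^2 \varphi^* g_0$.

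To finish, I would use the compactness of $\Sigma$: since $g$ and $\varphi^* g_0$ are both smooth positive-definite symmetric bilinear forms on a compact manifold, there exists $t_0 > 0$ large enough that $t_0^2 \varphi^* g_0 \geq g$ pointwise, so $f := f_{t_0}$ is a long spacelike embedding of $(\Sigma, g)$ into $(W, h)$. Theorem \ref{T} then supplies a $C^1$ isometric embedding $F : (\Sigma, g) \to W$ that $C^0$-approximates $f$, which is precisely the desired conclusion.

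I do not expect any genuine obstacle here: the corollary is essentially a one-step consequence of Theorem \ref{T}, and the only real work is the observation that the initial long embedding always exists because the hyperboloid embedding can be dilated in $\R^{2,1}$ to make its induced metric dominate any fixed Riemannian metric on the (compact) surface. The only subtle point worth verifying along the way is the smoothness of the identification $\Sigma \simeq H_1/\Gamma$, which is standard since closed surfaces of the same genus are diffeomorphic.
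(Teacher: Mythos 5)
Your proposal is correct and follows exactly the route the paper intends: the corollary is stated as an immediate consequence of Theorem \ref{T}, with the only missing ingredient being an initial long spacelike embedding, which you correctly supply by dilating the hyperboloid $H_t/\Gamma$ until its induced metric $t^2\varphi^*g_0$ dominates the given hyperbolic metric $g$ on the compact surface. The paper gives no further details, and your verification of the remaining points (cocompactness of the action, diffeomorphism $\Sigma\simeq H_1/\Gamma$, compactness argument for the choice of $t_0$) is accurate.
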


\section{The Nash-Kuiper process} 
We will give the proof of Theorem \ref{T} in the case where $(V,g)$ is a compact Riemannian surface and $(W,h)$ is a $3$-dimensional Lorentzian manifold. The methods used here can be applied in higher dimensions without employing any additional ideas. 
  \medskip 
 
  In order to apply the Nash-Kuiper process in this setting, we need to start with a long embedding, that is an embedding $f: V\rightarrow W$, such that the metric $\Delta:=f^{*}h-g $ is positive semi-definite. We call $\Delta$ the isometric default.
  \medskip 

 The Nash-Kuiper process consists of reducing the isometric default gradually. More precisely, we will consider a sequence $(g_n)_{n\in \N}$ of Riemannian metrics defined by $g_n=g+\delta_n\Delta$, where $(\delta_n)_{n\in\N}$ is a decreasing sequence of positive numbers converging to $0$. Using a variant of the convex integration formula called the corrugation process \cite{theilliere2022convex}, we build a sequence $(f_n)_{n\in \N}$ of embeddings from $V$ to $W$ such that: 
\begin{enumerate}
    \item $\parallel f_{n}^{*}h-g_n \parallel < \parallel g_{n}-g_{n+1} \parallel $,
    \item $f_n$ is $C^{0}$ close to $f_{n-1}$,
    \item $\parallel f_n - f_{n+1}\parallel_{g,\widetilde{h}}$ is under control,
   
\end{enumerate}
  where $\parallel . \parallel_{g,\widetilde{h}}$ is the operator norm with respect to the metric $g$ on $V$ and any fixed Riemannian metric on $W$, we denote it by $\widetilde{h}$. Note that the first condition implies that $f_n$ is long for the metric $g_{n+1}$ since $g_n \ge g_{n+1}$. Therefore, we need to be able, starting from a long embedding, to construct an embedding which is $\ep$-isometric, i.e, with small isometric default. To build $f_{n+1}$ from $f_n$, we will use Theillière's corrugation process formula:
  
\begin{definition}[\cite{theilliere2022convex}]\label{defcorrug}
    Let $f:U \rightarrow (W,h)$ be a map from an open set $U\subset V$, $\pi : U \rightarrow \mathbb{R}$ a submersion, $N\in \N^{*}$ and $\gamma : U \times \mathbb{R}/\mathbb{Z} \rightarrow f^*TW$ be a smooth loop family such that $\gamma(p,.) : \mathbb{R}/\mathbb{Z} \rightarrow f^*TW_p$ for every $p\in U$. The map defined by Corrugation Process is given by: 
    $$ F : p \mapsto exp_{f(p)} (\frac{1}{N} \Gamma(p,N\pi(p)))$$
    where $\Gamma(p,s)=\int_0^s (\gamma(p,t)-\Bar{\gamma}(p)dt $ and $\Bar{\gamma}(p)=\int_0^1 \gamma(p,t) dt$. We call $N$ the corrugation number.
\end{definition}
\begin{remark}\label{loccor}
    Unlike the convex integration formula, the corrugation process formula is defined directly on the manifold (coordinate free). Moreover, $F(p)$ is determined by the value of $f, \, \gamma,\, \pi$ at $p$.
\end{remark}
In addition, since we are using a Riemannian metric $\widetilde{h}$ on $W$ to compute the $C^{1}$ distance, it will be important to control the norm with respect to $\widetilde{h}$ of the timelike normal to $f_n$ for each step $n$. We will prove that this is possible and that the evolution of this norm at each $n$ can be absorbed by a good choice of the sequence $\delta_n$, which will ensure the $C^{1}$ convergence (see section $4$).  
\section{The fundamental example}

As stated above, using the corrugation process, we can deform the embedding $f$ on any fixed open set. Since $V$ is compact, we will use this fact, to deform $f$ on each chart. Hence, the problem is reduced to a local one. Recall that, the fundamental step in the Nash-Kuiper process is to construct $\varepsilon$-isometric maps. For simplicity, we will consider the specific case, where $(W,h)$ is the Minkowski space $\R^{2,1}$ of dimension three and $f: C:=[0,1]^{2}\rightarrow W$ is a spacelike embedding ($f^{*}h$ is a Riemannian metric). The general case follows immediately (see section $5$).

\subsection{The case of primitive metrics}
We consider here the case of primitive metrics (metrics of the form $\mu :=f^{*}h - \eta \, d\ell \otimes d\ell$), the general case will follow by iteration and it will be presented just after. 

    \begin{proposition}\label{local}
    Let $(W, h)$ be the Minkowski space $\mathbb{R}^{2,1}$ with the usual Lorentzian metric and let $f : C = [0,1]^2 \rightarrow (W,h)$ be a smooth spacelike embedding. Consider the metric $\mu = f^{*}h - \eta \, d\ell \otimes d\ell$, where $\ell : C \rightarrow \mathbb{R}$ is an affine projection and $\eta : C \rightarrow \mathbb{R}_{\geq 0}$ is a smooth function. Then, for any $\varepsilon > 0$, there exists a smooth $\varepsilon$-isometric spacelike embedding $F : (C, \mu) \rightarrow (W, h)$, i.e. an embedding such that
    $$
    \|F^{*}h - \mu\|_{C^{0}} \leq \varepsilon.
    $$
    Moreover, $F$ is $C^{0}$ close to $f$.
\end{proposition}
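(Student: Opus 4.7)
The plan is to construct $F$ by a single application of Theillière's corrugation process (Definition~\ref{defcorrug}) using the unit timelike normal $\nu$ to $f(C)$ as the direction of deformation. The key sign is $h(\nu,\nu)=-1$: pushing $f$ along $\nu$ \emph{decreases} the induced metric, matching the fact that $\mu\leq f^{*}h$. Such a smooth $\nu$ exists on $C$ because $f$ is spacelike of codimension one in the $3$-dimensional Lorentzian ambient and $C$ is simply connected. Let $V_{0}$ be the smooth vector field on $C$ dual to $d\ell$ via $f^{*}h$ and normalized by $d\ell(V_{0})=1$, and set $m:=f^{*}h(V_{0},V_{0})>0$; the positivity of $\mu$ (implicit in the conclusion that $F$ be spacelike) forces $\eta<m$ pointwise.

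Apply the corrugation with $\pi=\ell$, corrugation number $N$, and a loop family of the form $\gamma(p,t)=a(p,t)\,df(V_{0})+b(p,t)\,\nu(p)$ with $\overline{a}=\overline{b}=0$. From Definition~\ref{defcorrug} one has $dF=df+\gamma(p,N\ell)\,d\ell+O(1/N)$ (using $\overline{\gamma}=0$); using $h(\nu,df)=0$, $h(\nu,\nu)=-1$ and $h(df(V_{0}),df(\cdot))=m\,d\ell$, a direct computation gives
\[
F^{*}h-\mu=\bigl(2ma+ma^{2}-b^{2}+\eta\bigr)\,d\ell\otimes d\ell+O(1/N),
\]
so the requirement $F^{*}h=\mu+O(1/N)$ reduces to the pointwise algebraic constraint
\[
b(p,t)^{2}=m(p)\bigl(a(p,t)+1\bigr)^{2}+\eta(p)-m(p).
\]
This quadric $\Gamma_{p}$ has a smooth right branch, parameterized by $b\in\mathbb{R}$ via $a=-1+\sqrt{(b^{2}+m-\eta)/m}$ and passing through $(0,\pm\sqrt{\eta})$. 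Taking $b(p,t)=B(p)\sin(2\pi t)$ and tuning the amplitude $B(p)$ so that $\overline{a}(p)=0$ produces the required smooth loop family: at $B=0$ one has $\overline{a}=-1+\sqrt{1-\eta/m}<0$ (this is where the strict inequality $\eta<m$ enters), while $\overline{a}\to+\infty$ as $B\to\infty$, so the intermediate value theorem provides a smooth solution $B(p)$.

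With $\gamma$ fixed, the standard estimates for the corrugation process from \cite{theilliere2022convex} give $\|F-f\|_{C^{0}}=O(1/N)$ (from the $1/N$ prefactor in the exponential) and $\|F^{*}h-\mu\|_{C^{0}}=O(1/N)$ (from the $O(1/N)$ remainder in $dF$ coming from $\partial_{p}\Gamma$). Taking $N$ large enough makes both errors smaller than $\varepsilon$ and keeps $F^{*}h$ positive-definite, so $F$ is a smooth $\varepsilon$-isometric spacelike embedding $C^{0}$-close to $f$. The main obstacle is the convex integration step: the constraint curve $\Gamma_{p}$ is a hyperbola-like quadric rather than the ellipse arising in the classical Euclidean Nash--Kuiper setting, and verifying that it admits a smooth zero-mean loop smoothly depending on $p$ requires precisely the bound $\eta<m$, which is the local manifestation of the "tangent space far from the light cone" condition emphasized in the introduction.
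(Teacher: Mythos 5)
This is essentially the paper's proof: after rescaling ($V_0=\sqrt{m}\,u$, so $df(V_0)=\sqrt{m}\,\mathbf{t}$) and absorbing the term $df(V_0)$ into the loop --- which turns the paper's average condition $\bar\gamma=\mathbf{t}/d\ell(u)$ into your $\bar\gamma=0$ --- your constraint quadric $b^2=m(a+1)^2+\eta-m$ is precisely the hyperbola $h(w,w)=m-\eta$ carrying the paper's loop $r(\cosh\theta\,\mathbf{t}+\sinh\theta\,\mathbf{n})$, and both proofs tune a single amplitude by monotonicity so as to meet the one nontrivial average condition. The only substantive difference is the parameterization of that hyperbola (hyperbolic angle $\theta=\alpha\cos(2\pi s)$ versus vertical coordinate $b=B\sin(2\pi t)$); note that both versions share the same small wrinkle that the intermediate value theorem alone does not give smoothness of the amplitude at zeros of $\eta$, where the relevant derivative degenerates.
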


\medskip
\begin{remark}
    The proposition above can be found in \cite{Boukh}. The map $F$ constructed in the proof will be used in all what follows, we therefore give a proof for completeness. 
\end{remark}
\begin{proof}
     The corrugation process formula in this case is: 
$$
\forall p \in C, \quad F(p)=f(p)+\frac{1}{N} \int_{0}^{N \ell(p)}(\gamma(p, s)-\bar{\gamma}(p)) \mathrm{d} s
$$
We will now construct the family of loops $\gamma: C \times(\mathbb{R} / \mathbb{Z}) \rightarrow \mathbb{R}^{2,1}$: 
\medskip
\\ Let $(v,u)$ be an orthonormal basis for the metric $f^*h$ on $C$ such that $v\in ker(d\ell)$, and let 
$$\mathbf{v}=df(v) \; \; \; \; \mathbf{t}=df(u)$$
and $\mathbf{n}$ to be a normal field to $f$ with respect to the metric $h$. Note that $\mathbf{n}$ is a timelike vector field of norm $-1$ since $f$ is spacelike. The basis $\{\v,\t,\n\}$ is called $the$ $corrugation$ $frame$.
\\ We can now define the loop family $\gamma$ by: 

\begin{equation}\label{loop}
    \gamma(\cdot, s):=r(\cosh(\theta) \t+ \sinh (\theta)  \n) \quad \text {with} \quad \theta=\alpha \cos (2 \pi s)
\end{equation}
and where $r$ and $\alpha$ are functions on $C$ that will be chosen below such that: 
\begin{equation}\label{average}
      \bar{\gamma} = \frac{\t}{d\ell(u)}
\end{equation}
where $\bar{\gamma}$ is the average 
$$  \bar{\gamma} =r\left(\int_{0}^{1} \cosh(\alpha \cos (2 \pi s)) \mathrm{d} s\right) \t $$
We call $\alpha$, \textbf{the amplitude factor}, see Figure \ref{dilation}.

\begin{figure}[h!]
    \includegraphics[width=0.6\textwidth]{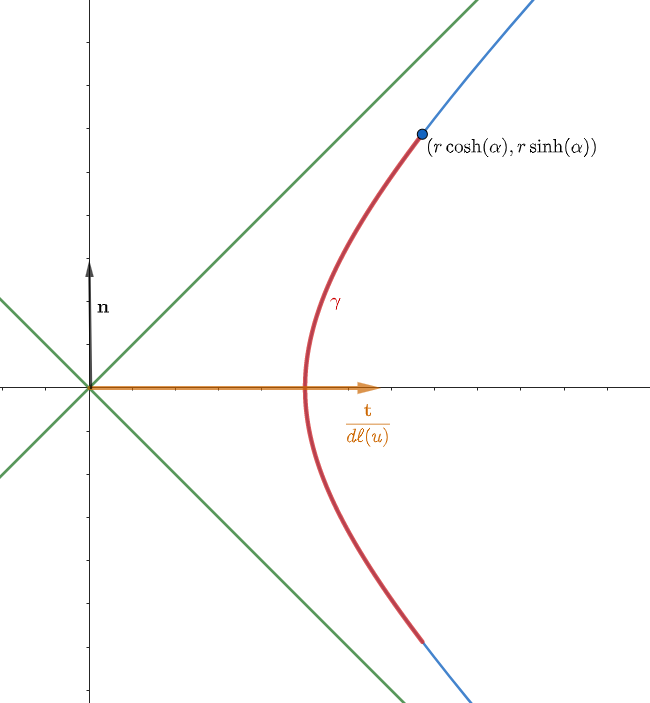}
    
    \centering
    \caption{The support of $\gamma$ (red curve) lies in the ball of radius $r$ for the Lorentzian metric $h$ and its average is $\frac{\t}{d\ell(u)}$.}
    \label{dilation}
\end{figure}

The differential of $F$ at the point $p$ is given by: 
$$
    dF  = df + (\gamma(p, N \ell(p))-\bar{\gamma}(p)) \otimes d \ell+\frac{1}{N} \int_{0}^{N \ell(p)} d(\gamma(p, s)-\bar{\gamma}(p)) \mathrm{d} s .$$
Let $$
L:=d f+(\gamma(\cdot, N \ell)-\bar{\gamma}) \otimes d\ell
$$
We call $L$ the target differential and we denote it by $L=L(f, \gamma, N, \ell)$. 
This target differential coincides with $d f$ on ker $d \ell$ and differs from it on the transversal directions by the addition of a term depending on $\gamma$. It is the term that will participate the most in the deformation. Indeed, since $\gamma$ is $1$-periodic on its second variable $s$, we have $$ \int_{0}^{N \ell(p)}(\gamma(p, s)-\bar{\gamma}(p)) \mathrm{d}s= \int_{\lfloor N\ell(p) \rfloor}^{N \ell(p)}(\gamma(p, s)-\bar{\gamma}(p)) \mathrm{d}s $$ and $\gamma$ is continuous on a compact set, we get 
\begin{equation}\label{target}
    F^*h = L^*h + O(\frac{1}{N}) 
\end{equation}
If $N$ is large enough, then $F$ induces a Riemannian metric on $C$ if $L$ does. We will now choose $r$ and $\alpha$ so that $L$ is $\mu$-isometric $(L^*h=\mu)$, this will imply for $N$ large enough that $F$ is $\varepsilon$-isometric for the metric $\mu$. We have:
\begin{equation}\label{targetdiff}
    L= df + (r(\cosh(\theta) \t+ \sinh (\theta)\n) - \frac{\t}{d\ell(u)}) \otimes d\ell
\end{equation}
and hence 
$$L^*h = f^*h + (r^{2} - \frac{1}{d\ell(u)^{2}})d\ell\otimes d\ell$$
We choose now $r$ so that $$r^{2} - \frac{1}{d\ell(u)^{2}} = - \eta $$
this is of course possible since we assumed $\mu$ to be Riemannian ($\mu(u)=1-\eta d\ell(u)^{2} > 0$). 
\medskip

 The only thing left now is to choose $\alpha$, so that equation \ref{average} is satisfied. We can write equation \ref{average} as:
$$\varphi(\alpha)=\frac{1}{rd\ell(u)}$$
where $\varphi$ is the map defined by  \begin{equation}\label{fi}
    \varphi: \alpha \in [0,+\infty[ \mapsto \int_{0}^{1} \cosh(\alpha \cos (2 \pi s)) \mathrm{d} s \in [1,+\infty[
\end{equation}
We conclude that equation \ref{average} has a unique, smooth solution over $C$, since $\varphi$ is smooth, increasing and surjective (see Figure \ref{graph}) and $$ \frac{1}{rd\ell(u)} \ge 1$$
\begin{figure}[h]
    \includegraphics[width=0.6\textwidth]{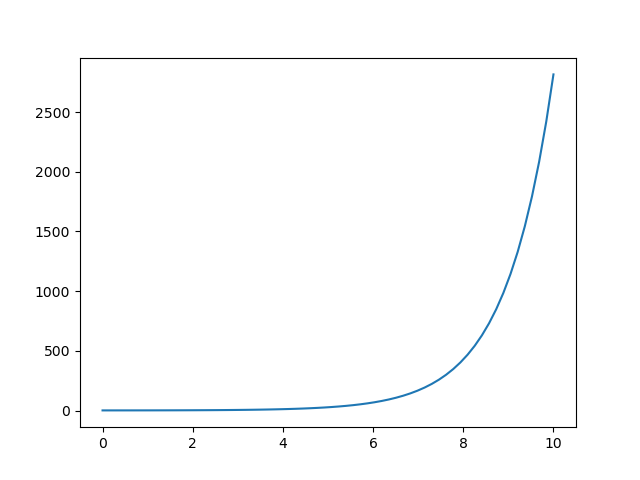}
\centering
\caption{The graph of the map $\varphi$}
    \label{graph}
\end{figure}
\end{proof}
\begin{notation}
    The map $F$ constructed in the proof above using the corrugation process with the family of loops \ref{loop} is denoted by:
    $$CP(f,\mu,N)$$ 
    Proposition \ref{local} shows that the map $CP(f,\mu,N)$ is $\varepsilon$-isometric if $N$ is large enough. 
\end{notation}
   \begin{remark}\label{gluing}
    Notice that for any $p\in C$ such that $\eta(p)=0$ $(\mu(p)=f^{*}h (p))$, we have $f(p)=F(p)$. This fact will help us go from the local construction to the global one (see section $5$).
\end{remark}
It would be important in the following to understand the behavior of the amplitude factor with respect to the metric. 
\begin{lemma}\label{behavior}
    The amplitude factor $\al$ decreases when the isometric default decreases.
    
\end{lemma}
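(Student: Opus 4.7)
The plan is a short computation starting from the two defining relations obtained in the proof of Proposition \ref{local}. Recall that $r$ is determined by the equation
\[
r^{2} \;=\; \frac{1}{d\ell(u)^{2}} - \eta,
\]
while the amplitude $\alpha$ is determined implicitly by $\varphi(\alpha)=\frac{1}{r\,d\ell(u)}$. First I would substitute the value of $r$ into the defining equation for $\alpha$ in order to eliminate $r$; a direct manipulation gives
\[
\varphi(\alpha) \;=\; \frac{1}{d\ell(u)\sqrt{\tfrac{1}{d\ell(u)^{2}}-\eta}} \;=\; \frac{1}{\sqrt{1-\eta\, d\ell(u)^{2}}}.
\]
This rewrites $\alpha$ as a function of the single scalar quantity $\eta\, d\ell(u)^{2}$, with the reference embedding $f$ and the affine projection $\ell$ held fixed.

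Next I would read off the monotonicity. The isometric default is $f^{*}h-\mu=\eta\, d\ell\otimes d\ell$, so at every point $p\in C$ decreasing the default (in the sense of the partial order on nonnegative symmetric two-tensors proportional to $d\ell\otimes d\ell$) is equivalent to decreasing $\eta(p)$. A decrease of $\eta$ strictly increases $1-\eta\, d\ell(u)^{2}$, hence strictly decreases the right-hand side $\tfrac{1}{\sqrt{1-\eta\, d\ell(u)^{2}}}$, hence strictly decreases $\varphi(\alpha)$. Since $\varphi:[0,+\infty)\to[1,+\infty)$ is smooth and strictly increasing (the property already used in the proof of Proposition \ref{local} to invert $\varphi$, cf.\ Figure \ref{graph}), its inverse is also strictly increasing, and therefore $\alpha$ decreases pointwise as well.

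There is no real obstacle here; the only point to be slightly careful about is that the statement only makes invariant sense once one fixes the reference data $(f,\ell)$, so that the default is pointwise proportional to $d\ell\otimes d\ell$ with scalar $\eta$. Once this is done, the lemma reduces to the monotonicity of the composition $\eta\mapsto \varphi^{-1}\!\bigl((1-\eta\, d\ell(u)^{2})^{-1/2}\bigr)$, which is immediate from the chain of monotonicities above.
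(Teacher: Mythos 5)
Your proof is correct and follows essentially the same route as the paper: you eliminate $r$ to obtain $\alpha=\varphi^{-1}\bigl((1-\eta\,d\ell(u)^{2})^{-1/2}\bigr)$ and then invoke the monotonicity of $\varphi^{-1}$, which is exactly the paper's argument. Your additional remark about fixing the reference data $(f,\ell)$ so that the default is measured by the scalar $\eta$ is a reasonable clarification but does not change the substance.
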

\begin{proof}
    Notice that in the proof of proposition \ref{local}, we have 
\begin{align*}
    \al=& \varphi^{-1}\left(\frac{1}{rd\ell(u)}\right) \\
    =& \varphi^{-1}\left(\frac{1}{\sqrt{1-\eta d\ell(u)^{2}}}\right)
\end{align*}
Since $\varphi^{-1}$ is increasing, we can conclude. 
\end{proof}
\medskip

  Fix now a Riemannian metric $\widetilde{h}$ on $W$, we will denote  by $\parallel . \parallel_{\widetilde{h}} $ the norm with respect to $\widetilde{h}$. By taking $N$ large enough, the maps $F=CP(f,\mu,N)$ and $f$ can be arbitrarily close with respect to $\widetilde{h}$. 
  Controlling the difference $dF - df$ is more delicate.

\begin{lemma}\label{lem}
   For $f$ and $F=CP(f,\mu,N)$ as before, we have the following inequality: 
    $$\parallel dF-df \parallel_{g,\widetilde{h}} \leq O(1/N) + M(\alpha_{max})\eta^{\frac{1}{2}} \parallel d\ell \parallel_{g,\widetilde{h}}    (\parallel df \parallel_{g,\widetilde{h}}  +\parallel \n \parallel_{\widetilde{h}})  $$
    where by $\parallel . \parallel_{g,\widetilde{h}}$ we mean the operator norm with respect to any metric $g\leq f^{*}h$ and $\widetilde{h}$ and $M(\alpha_{max})$ is a constant that depends on the maximal value $\alpha_{max}$ of the amplitude factor $\alpha$ over $C$.
\end{lemma}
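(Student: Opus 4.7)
The plan is to expand $dF$ using the corrugation formula, identify an $O(1/N)$ remainder, and control the remaining principal term $(\gamma - \bar\gamma)\otimes d\ell$ by translating the isometric default $\eta$ into a quantitative bound on the amplitude $\alpha$.

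First, I would differentiate the corrugation process formula to get
\[
dF = df + (\gamma(\cdot, N\ell) - \bar\gamma)\otimes d\ell + \frac{1}{N}\int_0^{N\ell}d(\gamma(\cdot,s) - \bar\gamma)\,ds.
\]
The last integral is $O(1/N)$ by the same $1$-periodicity/compactness argument used in equation \eqref{target}: the integrand is the $s$-derivative of a periodic function, so the value of the integral depends only on the fractional part of $N\ell$, and is therefore bounded uniformly. This absorbs into the $O(1/N)$ term of the statement. It remains to bound the tensor $(\gamma - \bar\gamma)\otimes d\ell$ in the operator norm $\|\cdot\|_{g,\widetilde h}$, which factors as $\|d\ell\|_{g,\widetilde h}\cdot\|\gamma - \bar\gamma\|_{\widetilde h}$.

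Next, using formula \eqref{loop} and \eqref{average}, I would write
\[
\gamma - \bar\gamma = \Bigl(r\cosh\theta - \frac{1}{d\ell(u)}\Bigr)\t + r\sinh\theta\,\n = r(\cosh\theta - \varphi(\alpha))\t + r\sinh\theta\,\n,
\]
so that
\[
\|\gamma-\bar\gamma\|_{\widetilde h}\leq r|\cosh\theta-\varphi(\alpha)|\,\|\t\|_{\widetilde h}+r|\sinh\theta|\,\|\n\|_{\widetilde h}.
\]
Since $\t = df(u)$ with $u$ of $f^{*}h$-norm $1$ and $g\leq f^{*}h$, we have $\|\t\|_{\widetilde h}\leq \|df\|_{g,\widetilde h}$, which supplies the $\|df\|_{g,\widetilde h}$ factor in the desired estimate; the coefficient of $\n$ already appears as $\|\n\|_{\widetilde h}$.

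The main obstacle, and the reason the $\eta^{1/2}$ factor appears, is the estimate of $r|\sinh\theta|$ and $r|\cosh\theta-\varphi(\alpha)|$. From the proof of Proposition \ref{local},
\[
\varphi(\alpha) = \frac{1}{\sqrt{1 - \eta\, d\ell(u)^{2}}},\qquad r = \sqrt{\frac{1}{d\ell(u)^{2}} - \eta},
\]
so $\varphi(\alpha) - 1 = O(\eta\, d\ell(u)^{2})$. Because $\varphi(\alpha) = 1 + \alpha^{2}/4 + O(\alpha^{4})$ near $0$ and, more generally, $(\varphi(\alpha)-1)/\alpha^{2}$ is bounded below by a positive constant on any compact interval $[0,\alpha_{\max}]$, one obtains $\alpha\leq C(\alpha_{\max})\,\eta^{1/2}|d\ell(u)|$. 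Combining with $r\leq 1/|d\ell(u)|$ yields $r\alpha\leq C(\alpha_{\max})\,\eta^{1/2}$. Since $|\sinh\theta|\leq \sinh\alpha\leq (\sinh\alpha_{\max}/\alpha_{\max})\,\alpha$ and $|\cosh\theta - \varphi(\alpha)|\leq \cosh\alpha - 1\leq \alpha\sinh\alpha_{\max}$, both coefficients are bounded by $M(\alpha_{\max})\,\eta^{1/2}$ after folding these constants into $M$. Putting everything together gives the stated inequality. The delicate point is purely the analytic behaviour of the transcendental map $\varphi^{-1}$ near $1$, but this is controlled once $\alpha_{\max}$ is fixed, which is exactly why the constant $M$ is allowed to depend on $\alpha_{\max}$.
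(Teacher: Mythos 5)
Your proposal is correct, and it follows the paper's overall decomposition — split $dF-df$ into the $O(1/N)$ integral remainder plus the rank-one principal term $(\gamma-\bar\gamma)\otimes d\ell$, expand $\gamma-\bar\gamma = r(\cosh\theta-\varphi(\alpha))\t + r\sinh\theta\,\n$, and show each coefficient is $O(\eta^{1/2})$ — but it handles the one delicate step differently. The paper bounds the combined coefficient $\bigl(\sqrt{2\cosh(\alpha)^2-2\varphi(\alpha)}+\sinh(\alpha)\bigr)/\varphi(\alpha)$ by $M(\alpha_{max})\sqrt{1-1/\varphi(\alpha)^2}$ via a soft argument: the ratio of the two quantities is continuous on $(0,\alpha_{max}]$ and extends continuously to $\alpha=0$ by a two-fold application of L'H\^opital's rule (this is the entire content of the appendix), hence is bounded on $[0,\alpha_{max}]$. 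You instead invert the relation $\varphi(\alpha)=(1-\eta\,d\ell(u)^2)^{-1/2}$ quantitatively: since $\varphi(\alpha)\geq 1+\alpha^2/4$ (immediate from the power series of $\cosh$), the isometric default controls the amplitude, $\alpha\leq C(\alpha_{max})\,\eta^{1/2}|d\ell(u)|$, and then $|\sinh\theta|\leq\sinh\alpha$ and $|\cosh\theta-\varphi(\alpha)|\leq\cosh\alpha-1$ are both $O(\alpha)$ on $[0,\alpha_{max}]$, while $r\leq 1/|d\ell(u)|$ cancels the $|d\ell(u)|$. This is more elementary and yields an explicit constant, entirely bypassing the appendix; the paper's version is shorter at the point of use but outsources the real work to the L'H\^opital computation. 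One cosmetic slip: the term $\frac{1}{N}\int_0^{N\ell}d(\gamma-\bar\gamma)\,ds$ involves the differential in $p$, not the $s$-derivative of a periodic function; the correct reason it is $O(1/N)$ is that $d_p(\gamma-\bar\gamma)$ is $1$-periodic in $s$ with zero mean, so the integral over $[0,N\ell]$ reduces to the integral over $[\lfloor N\ell\rfloor,N\ell]$ and is uniformly bounded by compactness — which is indeed the argument behind equation \eqref{target} that you cite, so nothing is actually missing.
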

\begin{proof}
    Since $\parallel dF - L \parallel_{g,\widetilde{h}} = O(1/N) $, the inequality reduces to the study of $\parallel df - L \parallel_{g,\widetilde{h}}$. 
     Since we have $L(v)=df(v)$, the difference $L-df$ reduces to studying $(L-df)(u)$, we have:
     \begin{align*}
             \parallel (L-df)(u) \parallel_{\widetilde{h}} =&  \parallel (rd\ell(u)\cosh(\theta) - 1)\t + rd\ell(u)\sinh(\theta)\n \parallel_{\widetilde{h}}\\
    \leq &  \parallel (rd\ell(u)\cosh(\theta) - 1)\t \parallel_{\widetilde{h}} + \parallel  rd\ell(u)\sinh(\theta)\n \parallel_{\widetilde{h}} \\
    \end{align*}
Using the fact that $\varphi(\al)=\frac{1}{rd\ell(u)}$, the inequality becomes: 
    
     \begin{align*}
     \parallel (L-df)(u) \parallel_{\widetilde{h}} \leq &\left( \left| \frac{\cosh(\theta)-\varphi(\al) }{\varphi(\alpha) } \right| + \left| \frac{\sinh(\theta)}{\varphi(\alpha)}\right| \right)  ( \parallel \t \parallel_{\widetilde{h}} + \parallel \n \parallel_{\widetilde{h}}) \\
    \leq& \left( \frac{\sqrt{(\cosh(\theta) -\varphi(\al))^{2}} +\sinh(\al) }{\varphi(\alpha) }  \right)  ( \parallel \t \parallel_{\widetilde{h}} + \parallel \n \parallel_{\widetilde{h}}) \\
    \leq& \left( \frac{\sqrt{\cosh(\theta)^{2} +\varphi(\al)^{2}-2\cosh(\theta)\varphi(\al)} +\sinh(\al) }{\varphi(\alpha) }  \right)  ( \parallel \t \parallel_{\widetilde{h}} + \parallel \n \parallel_{\widetilde{h}}) \\
     \leq& \left( \frac{\sqrt{2\cosh(\al)^{2} -2\varphi(\al)} +\sinh(\al) }{\varphi(\alpha) }  \right)  ( \parallel \t \parallel_{\widetilde{h}} + \parallel \n \parallel_{\widetilde{h}}) 
     \end{align*}
For the last inequality we used the fact that $\theta = \alpha cos(2\pi s)$ and $\cosh(\al) \geq \varphi(\al) $. Recall now that $r^{2} - \frac{1}{d\ell(u)^{2}} = - \eta$, hence 
\begin{equation}\label{equality}
    \eta d\ell(u)^{2}=1-(rd\ell(u))^{2}= 1-\frac{1}{\varphi(\al)^{2}} 
\end{equation}
The function:
$$\alpha \in [0,+\infty[ \mapsto \frac{ \frac{(\sqrt{2\cosh(\al)^{2} -2\varphi(\al)} +\sinh(\al) )^{2}}{\varphi(\alpha)^{2 }  }}{1-\frac{1}{\varphi(\alpha)^{2}}}$$ is continuous on $]0,\infty[$ and well defined at $0$ (by L'Hôpital's rule, see the appendix). Therefore, as long as $\alpha$ is bounded from above (say by $\alpha_{max})$, which is the case here, there exists $M(\alpha_{max})$ verifying: 
$$\frac{\sqrt{2 \cosh(\al)^{2}-2\varphi(\al)} +\sinh(\al)} {\varphi(\alpha) } \leq M(\alpha_{max})\sqrt{ 1-\frac{1}{\varphi(\alpha)^{2}}}$$ 
By equation \ref{equality}, we get 
$$\parallel (dF-df)(u) \parallel_{\widetilde{h}} \leq O(1/N) + M(\alpha_{max}) \eta^{\frac{1}{2}} \mid d\ell(u)\mid   ( \parallel \t \parallel_{\widetilde{h}} + \parallel \n \parallel_{\widetilde{h}})  $$
and since $$\mid d\ell(u)\mid \leq \parallel d\ell \parallel_{g,\widetilde{h}}\; \parallel u \parallel_{g},$$ 
$$\parallel \t \parallel_{\widetilde{h}}=\parallel df(u) \parallel_{\widetilde{h}}\leq \parallel df \parallel_{g,\widetilde{h}}\; \parallel u \parallel_{g}$$ and $\parallel u \parallel_{g} \leq \parallel u \parallel_{f^{*}h}= 1$
we conclude that
$$\parallel dF-df \parallel_{g,\widetilde{h}} \leq O(1/N) + M(\alpha_{max}) \eta^{\frac{1}{2}} \parallel d\ell \parallel_{g,\widetilde{h}}   ( \parallel df \parallel_{g,\widetilde{h}}   + \parallel \n \parallel_{\widetilde{h}})  $$
  \end{proof} 
Before considering the general case, we need to establish the following lemma:
 \begin{lemma}\label{lem2}
     Let $f$ and $F=CP(f,\mu,N)$ as before and let $K(\al_{max})=2\cosh(\al_{max})+1$, where $\alpha_{max}$ is the maximal value of the amplitude factor $\alpha$ over $C$. Then, for $N$ large enough, we have
      $$ \parallel dF \parallel_{g,\widetilde{h}} \leq K(\alpha_{max})  ( \parallel df \parallel_{g,\widetilde{h}} + \parallel \n \parallel_{\widetilde{h}})$$
         Moreover, if we denote by $\n_F$ a normal vector field of $F(C)$ with respect to the Lorentzian metric 
         $h$ of norm $-1$, then 
          $$\parallel \n_F \parallel_{\widetilde{h}} \leq K(\alpha_{max})  ( \parallel df \parallel_{g,\widetilde{h}} + \parallel \n \parallel_{\widetilde{h}})$$
  \end{lemma}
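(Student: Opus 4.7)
The plan is to exploit the approximation $dF = L + O(1/N)$ from the proof of Proposition \ref{local}, so that by taking $N$ large enough, both bounds reduce to estimating the target differential $L$ and the timelike unit $h$-normal to its image, up to an $O(1/N)$ correction absorbed into the constant. The key computation is the explicit form of $L$ in the corrugation frame $\{\v, \t, \n\}$: from $L(v) = \v$ and the identity $r\,d\ell(u) = 1/\varphi(\alpha)$, evaluating equation \ref{targetdiff} at $u$ yields
\[
L(u) = \frac{\cosh\theta}{\varphi(\alpha)}\,\t + \frac{\sinh\theta}{\varphi(\alpha)}\,\n.
\]
This single formula drives both halves of the lemma.

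For the bound on $\|dF\|_{g,\widetilde{h}}$, I would take a $g$-unit vector $w = av + bu$ and estimate
\[
\|L(w)\|_{\widetilde{h}} \leq |a|\,\|\v\|_{\widetilde{h}} + \frac{|b|}{\varphi(\alpha)}\cosh(\alpha_{max})(\|\t\|_{\widetilde{h}} + \|\n\|_{\widetilde{h}}),
\]
using $|\cosh\theta|, |\sinh\theta| \leq \cosh(\alpha_{max})$. Since $g \leq f^{*}h$, we have $\|v\|_g, \|u\|_g \leq 1$, hence $\|\v\|_{\widetilde{h}}, \|\t\|_{\widetilde{h}} \leq \|df\|_{g,\widetilde{h}}$. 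It remains to bound $|a|$ and $|b|/\varphi(\alpha)$ by $1$, which follows from the $\mu$-orthogonality of $(v, u)$ with $\mu(v,v) = 1$ and $\mu(u,u) = 1/\varphi(\alpha)^2$: these give $a^2 + b^2/\varphi(\alpha)^2 = \|w\|_\mu^2$, which is $\leq 1$ since in the intended use of the lemma the metric $g$ plays the role of $\mu$. Putting these ingredients together yields $\|L(w)\|_{\widetilde{h}} \leq (1 + \cosh(\alpha_{max}))(\|df\|_{g,\widetilde{h}} + \|\n\|_{\widetilde{h}}) \leq K(\alpha_{max})(\|df\|_{g,\widetilde{h}} + \|\n\|_{\widetilde{h}})$, and the $O(1/N)$ term is absorbed by choosing $N$ large.

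For the bound on $\|\n_F\|_{\widetilde{h}}$, the target tangent plane is $L(T_pC) = \Spa(\v,\,\cosh\theta\,\t + \sinh\theta\,\n)$; imposing $h$-orthogonality with both generators together with $h(\n_L, \n_L) = -1$ in the basis $\{\v, \t, \n\}$ of signature $(+,+,-)$ uniquely determines (up to sign)
\[
\n_L = \sinh\theta\,\t + \cosh\theta\,\n,
\]
namely $\n$ boosted by rapidity $\theta$ in the $(\t,\n)$-plane. Since $dF$ is $O(1/N)$-close to $L$ and the timelike unit normal depends continuously on the spacelike $2$-plane, $\n_F$ is an $O(1/N)$-perturbation of $\n_L$ in $\widetilde{h}$-norm, so for $N$ large
\[
\|\n_F\|_{\widetilde{h}} \leq \cosh(\alpha_{max})(\|\t\|_{\widetilde{h}} + \|\n\|_{\widetilde{h}}) + O(1/N) \leq K(\alpha_{max})(\|df\|_{g,\widetilde{h}} + \|\n\|_{\widetilde{h}}).
\]

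The main obstacle is the first bound: since $(v,u)$ is $f^{*}h$-orthonormal rather than $g$-orthonormal, controlling $|a|, |b|$ for a $g$-unit vector does not follow from $g \leq f^{*}h$ alone. The resolution is to go through the $\mu$-orthogonal structure of $(v,u)$, which is explicit precisely because the corrugation is engineered so that $L$ is a $\mu$-isometry. The second bound is essentially formal once one observes that the corrugation acts on the normal by a Lorentz boost of rapidity $\theta$ in the $(\t,\n)$-plane.
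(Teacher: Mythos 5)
Your route is essentially the paper's: reduce to the target differential $L$ via the $O(1/N)$ estimate, read off $L(u)=\tfrac{\cosh\theta}{\varphi(\alpha)}\t+\tfrac{\sinh\theta}{\varphi(\alpha)}\n$ from \eqref{targetdiff} and $r\,d\ell(u)=1/\varphi(\alpha)$, bound the coefficients by $\cosh(\alpha_{max})/\varphi(\alpha)\le\cosh(\alpha_{max})$, and for the second claim take the normal to $\mathrm{Im}(L)$ to be $\n$ boosted by rapidity $\theta$ in the $(\t,\n)$-plane and perturb by $O(1/N)$. That part is sound, and incidentally your sign is the right one: $\n_L=\sinh\theta\,\t+\cosh\theta\,\n$ is $h$-orthogonal to $\cosh\theta\,\t+\sinh\theta\,\n$ precisely because $h(\n,\n)=-1$, whereas the paper's $-\sinh\theta\,\t+\cosh\theta\,\n$ is not (a sign typo there; the norm estimate is unaffected).

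The genuine problem is the step you yourself single out as the main obstacle. The identity $\|w\|_{\mu}^{2}=a^{2}+b^{2}/\varphi(\alpha)^{2}$ for $w=av+bu$ is correct ($(v,u)$ is $\mu$-orthogonal with $\mu(v,v)=1$ and $\mu(u,u)=1/\varphi(\alpha)^{2}$ by \eqref{equality}), but the inequality you need, $\|w\|_{\mu}\le\|w\|_{g}=1$, goes the wrong way: in every application of this lemma the map is \emph{long} for the metric being targeted, i.e. $g\le\mu\le f^{*}h$, so a $g$-unit vector satisfies $\|w\|_{\mu}\ge 1$, which is a lower bound and yields no control on $|a|$ or $|b|/\varphi(\alpha)$. (It is not true that "$g$ plays the role of $\mu$" in the intended use: in Proposition \ref{prop} and Section 4 the metric $g$ is the fixed bottom metric and the intermediate metrics $\mu_j$ all sit above it.) So the passage from the frame estimate to the operator norm $\parallel\cdot\parallel_{g,\widetilde{h}}$ is not established. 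For comparison, the paper's own proof simply bounds $\|L(u)\|_{\widetilde{h}}$, notes $L(v)=df(v)$, and passes to the $g$-operator norm without comment, so you have correctly located a soft spot in the argument, but your repair does not close it. A working repair has to control the transverse coefficient $b=d\ell(w)/d\ell(u)$ directly, which is exactly what Lemma \ref{lem} does at the price of the extra factor $\parallel d\ell\parallel_{g,\widetilde{h}}$ that is absent from the statement of Lemma \ref{lem2}.
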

\begin{proof}
   From equation \eqref{targetdiff}, we have
    \begin{align*}
        \parallel L(u) \parallel_{\widetilde{h}} = & \parallel  rd\ell(u)\left[ \cosh(\theta) \t+ \sinh (\theta)\n\right]\parallel_{\widetilde{h}} \\ 
        \leq &  \frac{ \cosh(\theta) + |\sinh (\theta)|}{\varphi(\al)}( \parallel \t \parallel_{\widetilde{h}} + \parallel \n \parallel_{\widetilde{h}})\\
        \leq& \frac{2\cosh{\al}}{\varphi(\al)}( \parallel df \parallel_{g,\widetilde{h}} + \parallel \n \parallel_{\widetilde{h}})\\
    \end{align*}
    where for the last inequality, we used the fact
    $$\parallel \t \parallel_{\widetilde{h}}=\parallel df(u) \parallel_{\widetilde{h}}\leq \parallel df \parallel_{g,\widetilde{h}} \parallel u \parallel_{g}$$ 
    and that $\parallel u \parallel_{g} \leq \parallel u \parallel_{f^{*}h}= 1$. Now, since $\eta$ is bounded and $\eta d\ell(u)^{2}= 1-\frac{1}{\varphi(\al)^{2}}$ ($\alpha$ depends continuously on $\eta$), we can restrict $\al$ to a closed interval $[0,\alpha_{max}]$ and we can take $K_1 = \max \frac{2\\cosh{\al}}{\varphi(\al)}  $. Since $\parallel dF - L \parallel_{g,\widetilde{h}} = O(1/N) $ and $L(v)=df(v)$, we conclude that 
    $$ \parallel dF \parallel_{g,\widetilde{h}} \leq O(1/N) + K_1( \parallel df \parallel_{g,\widetilde{h}} + \parallel \n \parallel_{\widetilde{h}}) $$
    and for $N$ large enough, we get 
    $$ \parallel dF \parallel_{g,\widetilde{h}} \leq (K_1 +1)( \parallel df \parallel_{g,\widetilde{h}} + \parallel \n \parallel_{\widetilde{h}}) $$

For the second part of the lemma, we consider for each point $p$, the vector $\n_L= -\sinh (\theta) \t+ \cosh(\theta)\n$ normal to $Im(L)$ (at the point $p$) with respect to $h$. We have:
$$h(\n_L,\n_L)= -1$$
$$h(\n_L,dF(v))=O(1/N) + h(\n_L,L(v))=  O(1/N)$$
$$h(\n_L,dF(u))=O(1/N) + h(\n_L,L(u))=  O(1/N)$$
Hence, for $N$ large enough, and by continuity of $h$ and compactness of $C$, we have for every $p\in C$:
$$\parallel \n_F - \n_L \parallel_{\widetilde{h}} \leq O(1/N)$$
Therefore, since $$\parallel \n_L \parallel_{\widetilde{h}} \leq \cosh(\al) ( \parallel \t \parallel_{\widetilde{h}} + \parallel \n \parallel_{\widetilde{h}})$$
and by the same argument as before 
$$\parallel \n_L \parallel_{\widetilde{h}} \leq \cosh(\al) ( \parallel df \parallel_{g,\widetilde{h}} + \parallel \n \parallel_{\widetilde{h}})$$
we get for $K_2 = \max_{[0,\alpha_{max}]} \cosh(\al)+1 $, and $N$ large enough
\begin{align*}
    \parallel \n_F \parallel_{\widetilde{h}} \leq& K_2 ( \parallel df \parallel_{g,\widetilde{h}} + \parallel \n \parallel_{\widetilde{h}}).
\end{align*}
We can conclude by taking $K(\al_{max})=2\cosh(\al_{max})+1$ since $K(\al_{max})\geq \max \{ K_1,K_2\}$ ($K_1 \leq  \max_{[0,\alpha_{max}]} 2\cosh(\al)$ since $\varphi(\al) \geq 1$).
\end{proof}

\subsection{The case of a general metric}
We can now generalize the previous construction for any metric. Let $f:C\rightarrow \R^{2,1}$ be a smooth long spacelike embedding, and let $\mu$ be a Riemannian metric on $C$ of the form $f^{*}h - \sum_{i=1}^{k} \eta_i d\ell_{i}^{2}$, where each $\eta_i$ is a smooth positive function on $C$ and each $\ell_i : C \rightarrow \R$ is an affine projection (we can always write $\mu$ in this form see section $5$). We can construct a $\varepsilon$-isometric embedding $F: C\rightarrow \R^{2,1}$ for the metric $\mu$ using a successive corrugation process. More precisely, we construct a sequence of maps $f=F_0, F_1,..., F_k=F $, where 
\begin{equation*}\label{generalconstr}
    F_j=CP(F_{j-1}, \mu_j, N_{j})
\end{equation*}
where $\mu_j = F_{j-1}^{*}h - \eta_{j} d\ell_{j} \otimes d\ell_{j}$. By taking $N_j$ large enough, $F_j$ will be long for the metric $\mu_{j+1}$, and hence, we can use proposition \ref{local} to iterate the process.
\medskip

  To control the difference $dF-df$ now, we need to use lemmas \ref{lem} and \ref{lem2} successively. 
  
 \begin{proposition}\label{prop}
 For any $\varepsilon \geq 0$, we have the following inequality: 
  \begin{align*}
            \parallel dF - df \parallel_{g,\widetilde{h}} \leq \varepsilon+ M(\al_{max}) c \parallel \Delta \parallel_{g,\widetilde{h}}  ( \parallel df \parallel_{g,\widetilde{h}} + \parallel \n \parallel_{\widetilde{h}}) \widetilde{K}(\al_{max}) 
          \end{align*}
          where by $\parallel . \parallel_{g,\widetilde{h}}$ we mean the operator norm with respect to any metric $g\leq f^{*}h$ and $\widetilde{h}$. The constant $c$ depends only on $d\ell_1,...,d\ell_k$, $M(\al_{max})$ depends on the maximal value $\alpha_{max}$ of the amplitude factors $\alpha^{j}$ of $F_j=CP(F_{j-1}, \mu_j, N_{j})$ over $C$ for all $j\in [\![ 1,k ]\!]$ and $\widetilde{K}(\al_{max})=2^k(2\cosh(\al_{max})+1)^{k}$.
 \end{proposition}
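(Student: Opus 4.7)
The plan is to telescope the total difference across the $k$ successive corrugations. I would write
$$dF - df = \sum_{j=1}^{k}\bigl(dF_j - dF_{j-1}\bigr)$$
and apply Lemma \ref{lem} at each step to the base embedding $F_{j-1}$ and the primitive metric $\mu_j = F_{j-1}^{*}h - \eta_j\, d\ell_j\otimes d\ell_j$, with $\n_{j-1}$ the associated timelike unit normal. This produces, for each $j$,
$$\parallel dF_j - dF_{j-1}\parallel_{g,\widetilde{h}} \leq O(1/N_j) + M(\al_{max})\,\eta_j^{1/2}\parallel d\ell_j\parallel_{g,\widetilde{h}}\bigl(\parallel dF_{j-1}\parallel_{g,\widetilde{h}} + \parallel \n_{j-1}\parallel_{\widetilde{h}}\bigr).$$
For the lemma to apply with a single base metric $g$ throughout, I would use the chain $g \leq \mu \leq \mu_j \leq F_{j-1}^{*}h$ up to the $\varepsilon$-isometric error at the previous step, which can be made arbitrarily small by enlarging $N_{j-1}$.

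To keep the inflating factor $\parallel dF_{j-1}\parallel_{g,\widetilde{h}} + \parallel \n_{j-1}\parallel_{\widetilde{h}}$ under control, I would iterate Lemma \ref{lem2}: summing its two conclusions yields, once $N_j$ is large enough,
$$\parallel dF_j\parallel_{g,\widetilde{h}} + \parallel \n_j\parallel_{\widetilde{h}} \leq 2K(\al_{max})\bigl(\parallel dF_{j-1}\parallel_{g,\widetilde{h}} + \parallel \n_{j-1}\parallel_{\widetilde{h}}\bigr).$$
A straightforward induction from $F_0 = f$ then gives, for every $j \leq k$,
$$\parallel dF_{j-1}\parallel_{g,\widetilde{h}} + \parallel \n_{j-1}\parallel_{\widetilde{h}} \leq (2K(\al_{max}))^{k}\bigl(\parallel df\parallel_{g,\widetilde{h}} + \parallel \n\parallel_{\widetilde{h}}\bigr) = \widetilde{K}(\al_{max})\bigl(\parallel df\parallel_{g,\widetilde{h}} + \parallel \n\parallel_{\widetilde{h}}\bigr).$$

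Combining the two ingredients, summing over $j$, choosing each $N_j$ large enough to push the accumulated $O(1/N_j)$ error below the prescribed $\varepsilon$, and pulling $\widetilde{K}(\al_{max})$ out as a uniform factor, I obtain
$$\parallel dF - df\parallel_{g,\widetilde{h}} \leq \varepsilon + M(\al_{max})\widetilde{K}(\al_{max})\Bigl(\sum_{j=1}^{k}\eta_j^{1/2}\parallel d\ell_j\parallel_{g,\widetilde{h}}\Bigr)\bigl(\parallel df\parallel_{g,\widetilde{h}} + \parallel \n\parallel_{\widetilde{h}}\bigr).$$
To finish, I would collapse the sum into $c\,\parallel\Delta\parallel_{g,\widetilde{h}}$: since the total isometric default decomposes as $\Delta = \sum_{i=1}^{k}\eta_i\, d\ell_i\otimes d\ell_i$, each $\eta_j$ is controlled by $\parallel\Delta\parallel_{g,\widetilde{h}}$ up to a constant depending only on the fixed finite family $d\ell_1,\dots,d\ell_k$, and $\parallel d\ell_j\parallel_{g,\widetilde{h}}$ is itself such a constant; both are absorbed in $c$. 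I expect the main subtlety to be precisely this last bookkeeping step, together with ensuring that the same base metric $g$ remains admissible through all $k$ iterations, which forces an inductive choice of the corrugation numbers $N_j$ so that $F_{j-1}$ is genuinely long for $\mu_j$ with respect to $g$.
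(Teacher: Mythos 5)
Your proposal follows essentially the same route as the paper: telescoping $dF-df$ over the $k$ corrugations, applying Lemma \ref{lem} to each increment, controlling the inflating factor $\parallel dF_{j}\parallel_{g,\widetilde{h}}+\parallel \n_{j}\parallel_{\widetilde{h}}$ by iterating Lemma \ref{lem2} to get the factor $(2K(\al_{max}))^{k}=\widetilde{K}(\al_{max})$, absorbing the $O(1/N_j)$ terms into $\varepsilon$, and collapsing $\sum_j \eta_j^{1/2}\parallel d\ell_j\parallel_{g,\widetilde{h}}$ into a constant times a power of $\parallel\Delta\parallel_{g,\widetilde{h}}$. Your explicit attention to keeping the base metric $g$ admissible across iterations is a point the paper passes over silently, but it does not change the argument.
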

 \begin{proof}
 We have by lemma \ref{lem}:
     \begin{align*}
     \parallel dF - df \parallel_{g,\widetilde{h}} \leq& \sum_{j=0}^{k-1} \parallel dF_{j+1} - dF_j \parallel_{g,\widetilde{h}} \\
     \leq& \sum_{j=0}^{k-1} \left[O(1/N_j) + M(\alpha^{j}_{max}) \eta_j^{\frac{1}{2}} \parallel d\ell_j \parallel_{g,\widetilde{h}} ( \parallel \n_j \parallel_{\widetilde{h}} + \parallel dF_j \parallel_{g,\widetilde{h}})\right] \\
      \end{align*} 
      Here, $\n_j$ is a normal to $TF_j$, $\alpha^{j}_{max}$ is the maximum value of the amplitude factor for $F_j=CP(F_{j-1}, \mu_j, N_{j})$ over $C$ and $N_j$ is taken large enough as in lemma \ref{lem2}. Take now $\alpha_{max}=\max \alpha^{j}_{max}$ and let $K= 2\cosh(\alpha_{max}) +1$. Note that $K$ satisfy
      $$K \geq K_j(\alpha^{j}_{max})$$ 
      where $K_j(\alpha^{j}_{max})=2\cosh(\alpha^{j}_{max})+1$ as in lemma \ref{lem2}. We have 
      $$
          \parallel dF - df \parallel_{g,\widetilde{h}} \leq \sum_{j=0}^{k-1} \left[ O(1/N_j) + M(\alpha_{max}) \eta_j^{\frac{1}{2}} \parallel d\ell_j \parallel_{g,\widetilde{h}} (2K)^{j} (  \parallel df \parallel_{g,\widetilde{h}} + \parallel \n_0 \parallel_{\widetilde{h}})\right]$$
          Recall that we have $\Delta= \sum_{j=1}^{k} \eta_j d\ell_j^{2}$, then there exists a constant $c>0$ that depends only on $d\ell_1,...,d\ell_k$ and by taking each $N_j$ large enough we get
          \begin{align*}
            \parallel dF - df \parallel_{g,\widetilde{h}}  \leq \varepsilon + M(\alpha_{max}) c \parallel \Delta \parallel^{\frac{1}{2}}_{g,\widetilde{h}}   (  \parallel df \parallel_{g,\widetilde{h}} + \parallel \n_0 \parallel_{\widetilde{h}}) \widetilde{K}(\alpha_{max})
          \end{align*}
where $\widetilde{K}(\alpha_{max})=2^k (2\cosh(\al_{max})+1)^{k}$.
 \end{proof}

\section{The $C^{1}$ embedding}

We are now ready to apply the Nash-Kuiper process, recall that $f: (C,g) \rightarrow \R^{2,1} $ is a long spacelike embedding, meaning that the isometric default $\Delta=f^{*}h-g=:\sum_{1}^{k}\eta_j d\ell_j^{2}$ is positive semi-definite (we recall that $h$ is the Lorentzian metric on the Minkowski space). The goal is to construct a $C^{1}$ isometric embedding $f_\infty : (C,g) \rightarrow (\R^{2,1},h) $. The construction goes as follows: 
\begin{itemize}
    \item First, notice that by lemma \ref{behavior}, if the isometric default converges to $0$, then the amplitude  factor $\alpha$ converges to $0$ too. In the Nash-Kuiper process (see section $2$), the isometric default is getting smaller in each step. Hence, we can choose $\alpha_{max}$ from the beginning such that the amplitude factor is bounded above by $\alpha_{max}$ for any $n$. 
\item Up to taking large corrugation numbers at each step and since $C$ is compact, we can choose the $1$-forms $d\ell_1,...,d\ell_k$ once for all (see lemma $29.3.1$ in \cite{eliash}).
    \item  We consider a sequence $(g_n)_{n\in \N}$ of Riemannian metrics defined by $g_n=g+\delta_n\Delta$, where $(\delta_n)_{n\in\N}$ is a decreasing sequence of positive numbers converging to $0$ such that: 
    $$\sum \sqrt{\delta_{n-1}-\delta_{n}}(2\widetilde{K})^{n}< \infty$$
    where $\widetilde{K}= 2^{k}(2\cosh(\al_{max}) +1 )^{k}$, where $k$ is the number of $1$-forms $d\ell_1,...,d\ell_k$ in the previous point.
    \item For any fixed $\varepsilon >0$, we introduce a sequence $a_n$ of positive numbers such that $$ \sum a_n < \varepsilon$$
    Starting from $f_0$, we build by corrugation process a sequence $(f_n)_{n\in\N}$ of embeddings such that: 
    \begin{enumerate}

           \item $\parallel f_{n}^{*}h-g_n \parallel_{g,\widetilde{h}} \leq \parallel g_{n+1}-g_n \parallel_{g,\widetilde{h}} $,
    \item $\parallel f_n-f_{n-1}\parallel_{\widetilde{h}} \leq a_n$,
    \item $\parallel df_n-df_{n-1}\parallel_{\widetilde{h}} \leq a_n + T \parallel g_n - g_{n-1}\parallel^{\frac{1}{2}}_{g,\widetilde{h}} (2\widetilde{K})^{n}\;\,$,
\end{enumerate}

where $T$ is a constant that does not depend on $n$. By the construction \ref{generalconstr}, we only need to take large corrugation numbers at each step to satisfy the first two conditions. Regarding the third condition, notice that condition one ensures that $f_{n-1}$ is long for the metric $g_{n}$, applying \ref{generalconstr} on $f_{n-1}$ we can build an almost isometric embedding $f_n$ for the metric  $g_{n}$. We have by  proposition \ref{prop}:
      \begin{align*}
            \parallel df_{n} - df_{n-1} \parallel_{\widetilde{h}}  \leq a_n + M(\alpha_{max}) c \parallel g_n - f_{n-1}^{*}h \parallel^{\frac{1}{2}}_{g,\widetilde{h}}  \widetilde{K} ( \parallel df_{n-1}\parallel_{g,\widetilde{h}} +\parallel \n_{f_{n-1}} \parallel_{\widetilde{h}} )
          \end{align*}
          where by $\n_{f_{n-1}}$ we mean the normal to $f_{n-1}(C)$ with respect to the Lorentzian metric $h$. Using lemma \ref{lem2} successively, we get 
          \begin{align*}
            \parallel df_{n} - df_{n-1} \parallel_{\widetilde{h}}  \leq& a_n +  M(\alpha_{max}) c \parallel g_n - f_{n-1}^{*}h \parallel^{\frac{1}{2}}_{g,\widetilde{h}}  (2\widetilde{K})^{n} ( \parallel df_0\parallel_{g,\widetilde{h}} + \parallel \n_{f_0} \parallel_{\widetilde{h}} )\\
            \leq & a_n + 2 M(\alpha_{max}) c \parallel g_n - g_{n-1} \parallel^{\frac{1}{2}}_{g,\widetilde{h}}  (2\widetilde{K})^{n} ( \parallel df_0\parallel_{g,\widetilde{h}}+ \parallel \n_{f_0} \parallel_{\widetilde{h}} )
          \end{align*}
          the last inequality comes from combining the first condition with the triangle inequality:
          \begin{align*}
              \parallel f_{n-1}^{*}h-g_{n} \parallel^{\frac{1}{2}}_{g,\widetilde{h}}  \leq&  \parallel f_{n-1}^{*}h-g_{n -1 } \parallel^{\frac{1}{2}}_{g,\widetilde{h}}  + \parallel g_{n-1}-g_n \parallel^{\frac{1}{2}}_{g,\widetilde{h}}  \\ 
              \leq & 2 \parallel g_n - g_{n-1} \parallel_{g,\widetilde{h}}^{\frac{1}{2} }
          \end{align*}
          We take the constant $T:= 2 M(\alpha_{max}) c ( \parallel df_0\parallel_{g,\widetilde{h}}+ \parallel \n_{f_0} \parallel_{\widetilde{h}} )$.
\end{itemize}
By construction, the sequence $(f_n)_{n\in\N}$ will converge to a $C^{1}$ embedding $f_\infty$ which is isometric between $(C,g)$ and $(W,h)$. Indeed, condition $(2)$ implies that the sequence $(f_n)_{n\in\N}$ converges to a continuous map $f_\infty$ and such that $$\parallel f_0-f_{\infty}\parallel_{\widetilde{h}} \leq \varepsilon$$
Moreover, combining condition $(3)$ and the fact that 
$$\sum \parallel g_n - g_{n-1}\parallel^{\frac{1}{2}}_{g,\widetilde{h}} (2\widetilde{K})^{n} = \parallel \Delta \parallel_{g,\widetilde{h}}^\frac{1}{2} \sum \sqrt{\delta_{n-1}-\delta_{n}}(2\widetilde{K})^{n}< \infty$$
  ensures that $f_\infty$ is $C^{1}$. We conclude by condition $(1)$ that $f_\infty^{*}h=g$. 
  \section{The Global construction}
  The constructions presented in section $3$ are defined locally and can be extended globally by the relative property of the corrugation process. We give here the global construction for completeness. 
    \begin{proposition}
        Let $(V,g)$ be a closed Riemannian surface and $(W,h)$ a Lorentzian manifold. For any long spacelike embedding $f:V\rightarrow W$ (i.e, $\Delta =f^*h-g $ is semi-definite positive), and any $\varepsilon>0$, there exists a spacelike embedding $F : V\rightarrow W$ such that:
$$\left\|F^{*}h-g\right\|_{C^{0}}\leq \varepsilon$$ 
    \end{proposition}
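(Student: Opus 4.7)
The plan is to reduce the statement on the closed manifold $V$ to the cube-wise construction of Section 3 through a finite partition-of-unity argument, gluing the successive local corrugations by means of the relative property recorded in Remark \ref{gluing}.

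First, using the compactness of $V$, I would choose a finite covering $V=\bigcup_{i=1}^{m}U_i$ by open sets whose closures are contained in coordinate charts identified with the cube $C=[0,1]^{2}$, and such that each $f(\overline{U_i})$ is contained in a coordinate chart of $W$ on which $h$ reads as a small perturbation of a flat Lorentzian metric. Fix a smooth partition of unity $\{\rho_i\}$ with $\operatorname{supp}\rho_i\subset U_i$, so that $\Delta=\sum_{i=1}^{m}\rho_i\Delta$. On each chart I decompose
\begin{equation*}
\rho_i\Delta \;=\; \sum_{j=1}^{k_i}\eta_{i,j}\,d\ell_{i,j}\otimes d\ell_{i,j},
\end{equation*}
where the $\ell_{i,j}$ are affine projections on the cube (this is the positive-combination lemma invoked in Section 4, Lemma $29.3.1$ of \cite{eliash}); multiplying the coefficients by $\rho_i$ if needed, I may arrange for each $\eta_{i,j}$ to vanish on a neighborhood of $\partial U_i$ and hence extend by zero to a smooth nonnegative function on $V$.

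Next, I would build inductively a sequence of smooth spacelike embeddings $f=f_0,f_1,\dots,f_m=:F$ of $V$ into $W$. Assuming by induction that $\|f_{i-1}^{*}h-(g+\sum_{j\ge i}\rho_j\Delta)\|_{C^{0}}$ is small, the target metric $\mu_i:=f_{i-1}^{*}h-\rho_i\Delta$ is Riemannian and $f_{i-1}$ is long for it. I then apply on $U_i$ the iterated corrugation construction of Section 3.2 with the primitive decomposition above, producing $f_i$ with $\|f_i^{*}h-\mu_i\|_{C^{0}}\le \varepsilon/m$ by taking all the corrugation numbers large enough. Remark \ref{gluing} guarantees that each elementary corrugation preserves its input wherever the relevant $\eta_{i,j}$ vanishes; since every $\eta_{i,j}$ vanishes near $\partial U_i$, the composite $f_i$ agrees with $f_{i-1}$ outside $U_i$ and thus extends smoothly to all of $V$. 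For the corrugation numbers large enough, $f_i$ stays $C^{0}$-close to $f_{i-1}$ and hence remains an embedding by compactness, and Lemma \ref{lem2} keeps its tangent plane uniformly bounded away from the light cone, so it stays spacelike.

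Summing telescopically,
\begin{equation*}
\|F^{*}h-g\|_{C^{0}} \;\le\; \sum_{i=1}^{m}\bigl\|f_i^{*}h-f_{i-1}^{*}h+\rho_i\Delta\bigr\|_{C^{0}} \;\le\; \varepsilon,
\end{equation*}
which is the desired estimate. The main technical point I expect to have to verify is that the analysis of Proposition \ref{local}, carried out in Minkowski space, transfers to a general Lorentzian chart up to additional $O(1/N)$ errors: the exponential map $\exp_{f(p)}$ entering Definition \ref{defcorrug} differs from flat translation by $O(1/N^{2})$ for arguments of size $O(1/N)$, so the identities behind Lemmas \ref{lem} and \ref{lem2} survive with harmless corrections absorbed when the corrugation numbers are taken large. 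The partition-of-unity decomposition is classical, and the gluing is essentially automatic from Remark \ref{gluing}; the rest is bookkeeping.
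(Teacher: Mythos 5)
Your proposal is correct and follows essentially the same route as the paper: a finite atlas with a subordinate partition of unity, the decomposition of each $\rho_i\Delta$ into primitive metrics via Lemma $29.3.1$ of \cite{eliash}, successive corrugations chart by chart glued through the vanishing of the coefficients (Remark \ref{gluing}), a telescoping estimate, and the passage to a general Lorentzian target via the exponential map in Definition \ref{defcorrug}. The only point treated more carefully in the paper is why the result is an embedding rather than merely an immersion ($C^{0}$-closeness alone does not rule out self-intersections; the paper invokes the standard Nash--Kuiper perturbation and, when $\chi(V)\neq 0$, a covering-map argument in a globally hyperbolic tubular neighborhood).
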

\begin{proof}

The construction follows the approach of Nash \cite{nash1954c1} and relies on the relative property of Theilllière's corrugation process \cite{theilliere2022convex}: 
\begin{itemize}
\item We choose a finite set $\phi_i: U_i \subset \mathbb{R}^2 \rightarrow V_i \subset V$ of local parametrizations of $V$ and a partition of unity $\{\rho_i\}$ subordinated to it ($supp(\rho_i) \subset U_i$) for all $i\in [\![1,m]\!]$.
    \item We identify the space of inner products of $\mathbb{R}^{2}$ (given by symmetric positive definite matrices $\begin{bmatrix} E & F \\ F & G \\ \end{bmatrix}$) with the interior of the cone $S^{+}(\mathbb{R}^{2}) \subset \R^{3}$ defined by the conditions:
    $$
\begin{cases} EG-F^{2} > 0, \\ E, G > 0 \end{cases}
$$
\item We note $\Delta_i =:\phi_i^*\Delta_{|\phi_i(supp(\rho_i))}$ so that we have
$$ \Delta = \sum_i \rho_i \Delta_i.$$
Since $supp(\rho_i)$ is compact and $\Delta_i : supp(\rho_i) \rightarrow S^{+}(\mathbb{R}^{2})$ is continuous, we can find $p_i$ squares of linear forms $\ell_{i,1}\otimes \ell_{i,1} , \ell_{i,2}\otimes \ell_{i,2}, ..., \ell_{i,p_i}\otimes \ell_{i,p_i}$ such that 
$$\Delta_i = \sum_{j=1}^{{p_i}} \eta_{i,j} \ell_{i,j} \otimes \ell_{i,j}$$ 
where each $\eta_{i,j} : supp(\rho_i) \rightarrow \mathbb{R}^{+} $ is a smooth function. For a proof of this fact, check lemma $29.3.1$ in \cite{eliash}.
\item Working chart by chart, we will build a sequence of intermediary maps $f=F_0, F_1, ..., F_m$ defined on $V$ such that
$$\left\|F_i^{*}h-(f^{*}h - \sum_{k=1}^{i}\rho_j \Delta_j)\right\|_{C^{0}}\leq \varepsilon$$
The map $F_m$ will be $\varepsilon$-isometric for the metric $g$ provided the corrugation numbers are large enough in each step.
 
\item Each map $F_i$ is built by applying $p_i$ successive corrugation process on $F_{i-1}$. We will use proposition \ref{local} in the chart $U_i$ to build a sequence of maps $F_{i-1}=F_i^{0}, F_i^{1},..., F_i^{p_i}=F_i $, where 
$$F_i^{j}=CP(F_i^{j-1}, \mu_i^{j}, N_{i,j})$$ 
and $\mu_i^{j} = F_{i}^{j-1 *}h - \rho_i\eta_{i,j} \ell_{i,j} \otimes \ell_{i,j}$. By remark \ref{gluing}, the maps are well defined on $\Sigma$, in fact, they all coincide outside $\phi_i(supp(\rho_i))$ since the metric $\mu_i^{j}$ is equal to $F_{i-1}^*h$ there.
\end{itemize}

Even though we introduced the construction when $W$ is the Minkowski space, the construction is still well defined for general $W$ and the conclusion remains the same. Indeed, in the coordinate free definition \ref{defcorrug} of the corrugation process, If we take $U$ to be compact and such that $f^*TW$ is trivial, then we can find a smaller neighborhood of the zero section of $f^*TW \rightarrow U$ where the exponential map is well defined. Since the loop family in definition \ref{defcorrug} $\Gamma(x,s)$ is bounded, we can choose $N$ large enough so that $\frac{1}{N}\Gamma(x,N\pi(x))$ lies inside this neighborhood. Using the same loop family \ref{loop} as in proposition \ref{local}, we get an $\varepsilon$-isometric embedding.
   \medskip
   
    Using standard arguments of Nash-Kuiper, the construction above can be adapted to prevent self intersections provided, we start with an embedding. Hence, it produces embeddings and not only immersions. Moreover, if the Euler characteristic of $V$ is different than $0$, the immersions constructed are embeddings. Indeed, we can choose a tubular neighborhood of $f(V)$ which is globally hyperbolic, of the form $V\times ]-\varepsilon,\varepsilon[$, where $f(V)$ is identified
with level $0$. By $C^{0}$-closeness, we can construct the immersion $F$ to lie inside $V\times ]-\varepsilon,\varepsilon[$. Now, notice that $F$ composed with the projection on the first factor is a covering map (since $F$ is spacelike), and since $V$ is compact and its Euler characteristic is different from $0$, we conclude that it is a diffeomorphism. Therefore, $F$ is an embedding.
\end{proof}

\section{Appendix}
In this appendix, we prove that the function $\psi$ appearing in the proof of lemma \ref{lem} and defined by:
$$\psi : \alpha \in [0,+\infty[ \mapsto \frac{ \frac{\sqrt{2\cosh(\al)^{2} -2\varphi(\al)} +\sinh(\al)} {\varphi(\alpha) }}{\sqrt{1-\frac{1}{\varphi(\alpha)^{2}}}}$$
is well defined and continuous at zero. 
\\ First, notice that: 
\begin{align*}
    \psi(\al) =& \frac{ \sqrt{2\cosh(\al)^{2} -2\varphi(\al)} +\sinh(\al)} {\sqrt{\varphi(\al)^{2}-1}}
\end{align*}
We note 
$$\psi_1(\al):=\frac{\cosh(\al)^{2} -\varphi(\al)}{\varphi(\al)^{2}-1} \;\;\; \text{and} \;\; 
\psi_2(\al):= \frac{\sinh(\al)^{2}}{\varphi(\al)^{2}-1} $$
In the following, we will use L'Hôpital's rule to prove that $\psi_1$ and $\psi_2$ are well defined at $0$. Noting that $\psi=\sqrt{2\psi_1}+\sqrt{\psi_2}$, we can conclude.

By \ref{fi}, we have:
$$\varphi^{\prime}(\al)=\int_{0}^{1} \cos (2 \pi s) \sinh (\alpha \cos (2 \pi s)) \mathrm{d} s$$
$$\varphi^{\prime\prime}(\al)=\int_{0}^{1} \cos (2 \pi s)^{2} \cosh(\alpha \cos (2 \pi s)) \mathrm{d} s$$

Since $\cosh(0)^{2}-\varphi(0)=0=\varphi(0)^{2}-1$ and both $\cosh(\al)^{2} -\varphi(\al)$ and $\varphi(\al)^{2}-1$ are differentiable on the right, and $(\varphi(\al)^{2}-1)^\prime= 2\varphi^{\prime}(\al)\varphi(\al)$ is strictly positive near $0$, we can apply L'Hôpital's rule: 
\begin{align*}
    \lim\limits_{\al\to 0} \psi_1(\al) =& \lim\limits_{\al\to 0}\frac{ (\cosh(\al)^{2} -\varphi(\al))^{\prime} }{2\varphi^{\prime}(\al)\varphi(\al)} \\
     =& \lim\limits_{\al\to 0} \frac{ \sinh(2\al)-\varphi^{\prime}(\al) }{ 2\varphi^{\prime}(\al)\varphi(\al)} \\
\end{align*}
Since $\sinh(0)-\varphi^{\prime}(0))=0=2\varphi^{\prime}(0)\varphi(0)$, we need to apply L'Hôpital's rule a second time, we get:
\begin{align*}
    \lim\limits_{\al\to 0} \psi_1(\al)   =& \lim\limits_{\al\to 0} \frac{ (\sinh(2\al)-\varphi^{\prime}(\al))^{\prime }}{ (2\varphi^{\prime}(\al)\varphi(\al))^{\prime}}\\
     = & \lim\limits_{\al\to 0} \frac{ 2\cosh(2\al)-\varphi^{\prime \prime}(\al) }{ 2\varphi^{\prime \prime}(\al)\varphi(\al) + 2\varphi^{\prime}(\al)^{2}}
\end{align*}
We have $$2\varphi^{\prime \prime}(0)\varphi(0) + 2\varphi^{\prime}(0)^{2}=2 \int_{0}^{1} \cos (2 \pi s)^{2} \mathrm{d} s =1 $$
Hence $\psi_1(0)$ is well defined. 
\medskip

 By the same arguments, we have: 
\begin{align*}
    \lim\limits_{\al\to 0} \psi_2(\al) =& \lim\limits_{\al\to 0}\frac{ \sinh(\al)^{2}  }{ 2\varphi^{\prime}(\al)\varphi(\al)} \\
     =& \lim\limits_{\al\to 0} \frac{ \sinh(2\al) }{ 2\varphi^{\prime}(\al)\varphi(\al)} \\
     =& \lim\limits_{\al\to 0}\frac{ \sinh^{\prime }(2\al)}{ (2\varphi^{\prime}(\al)\varphi(\al))^{\prime}}\\
     = & \lim\limits_{\al\to 0}\frac{ 2\cosh(2\al) }{ 2\varphi^{\prime \prime}(\al)\varphi(\al) + 2\varphi^{\prime}(\al)^{2}}\\
     =& \frac{2}{2 \int_{0}^{1} \cos (2 \pi s)^{2} \mathrm{d} s} \\
     =& 2
\end{align*}
 Therefore, the function $\psi$ is well defined and continuous at $0$.

\bibliographystyle{plain}
\bibliography{Bib.bib}
  \end{document}